\documentclass[10pt]{amsart}\oddsidemargin -1mm \evensidemargin -1mm \topmargin -15mm\headheight5mm \headsep 3.5mm \textheight 245mm\textwidth 170mm 
\usepackage{amsfonts, amssymb, amsthm, amsmath, euscript, hhline}
\usepackage{stmaryrd}
\theoremstyle{definition}\newtheorem{theorem}{Theorem}[section]\newtheorem{lemma}[theorem]{Lemma}\newtheorem{claim}[theorem]{Claim}\newtheorem{proposition}[theorem]{Proposition}\newtheorem{corollary}[theorem]{Corollary}
\DeclareMathOperator{\Ue}{\mathrm{U}}\DeclareMathOperator{\Ze}{\mathrm{Z}}

\newcommand{\apro}[1]{{\rm\setcounter{AP}{#1}\roman{AP}}}

\newcommand{\adj}{\mathrm{ad~}}
\newcounter{AP}
\begin{document}
\title{On Gelfand-Kirillov conjecture for some W-algebras}\maketitle
\begin{abstract}Consider the W-algebra $W$ attached to the smallest nilpotent orbit in a simple Lie algebra $\frak g$ over an algebraically closed field of characteristic 0. We show that if an analogue of the Gelfand-Kirillov conjecture holds for such a W-algebra then it holds for the universal enveloping algebra $\Ue(\frak g)$. This together with a result of A.~Premet implies that the analogue of the Gelfand-Kirillov conjecture fails for some $W$-algebras attached to some nilpotent orbits in Lie algebras of types $B_n~(n\ge 3)$, $D_n~(n\ge 4)$, $E_6, E_7, E_8$, $F_4$.

{\bf Key words:} W-algebra, Gelfand-Kirillov conjecture, non-commutative localization.

\end{abstract}
\section{Introduction}

Classical works, see for example~\cite[p. 486]{Cn}, show that a significant class of non-commutative algebras have a quotient field which is a non-commutative skew field. In this framework it is natural to ask whether or not such a skew field is isomorphic to a quotient field of a suitable Weyl algebra over some base commutative field. A more precise version of this question is known as the Gelfand-Kirillov conjecture: ``Whether or not the quotient field of the universal enveloping algebra of any algebraic Lie algebra is isomorphic to a quotient field of some Weyl skew field?''. In this paper we study a similar question for some W-algebras.

The solution of the original Gelfand-Kirillov conjecture for Lie algebras of type A and some other cases was settled by I.~Gelfand and A.~Kirillov themselves~\cite{GK1, GK2} and is positive. A version of this problem for W-algebras attached to type A Lie algebras was considered in~\cite{FMO} where the authors provides a positive solution to the corresponding problem.

In his paper~\cite{Pr2} A.~Premet shows that the Gelfand-Kirillov conjecture fails for $\Ue(\frak g)$ if $\frak g$ is simple and $\frak g$ is not of type $A_n$, $C_n$ or $G_2$. Another result of the same author~\cite{Pr1} shows that for a simple Lie algebra $\frak g$ we have that $\Ue(\frak g)$ is ``almost equal'' to the tensor product of some W-algebra with a suitable Weyl algebra.

The goal of this paper is to modify the result of~\cite{Pr1}, i.e. to show that the quotient field of $\Ue(\frak g)$ is isomorphic to the quotient field of the tensor product of the same W-algebra with a suitable Weyl algebra. This together with results of~\cite{Pr2} implies that the Gelfand-Kirillov conjecture fails for some W-algebras. It worth to mention that such W-algebras are deeply studied in~\cite{Pr1} and explicit generators and relations are known for them.

From now on the base field for all objects is an algebraically closed field $\mathbb F$ of characteristic 0.
\section{W-algebras}\label{SW}

A W-algebra $\Ue(\frak g, e)$ is some finitely generated algebra attached to a semisimple Lie algebra $\frak g$ and an $\frak{sl}_2$-triple $\{e, h, f\}$ inside $\frak g$ (see for example~\cite{Pr1}). The isomorphism class of such an algebra $\Ue(\frak g, e)$ depends only on the conjugacy class of $e$. We are particularly interested in W-algebras attached to an element $e$ from the smallest possible nonzero nilpotent orbit in $\frak g$. We take an explicit generators and relations presentation for such W-algebras~\cite[Theorem 1.1]{Pr1} and modify the notation of~\cite{Pr1} a little. Namely to a simple Lie algebra $\frak g$ we attach a semisimple Lie algebra $\frak g(0)$ with a $\frak g(0)$-module $\frak g(1)$. Then the algebra $H$ (this is a notation of~\cite{Pr1} for such W-algebras) would be generated by $\frak g(0), \frak g(1)$ and an additional element $C$ subject to the following relations:

(\apro{1}) $xy-yx=[x, y]$ for all $x, y\in\frak g(0)$, where $[x, y]$ is the Lie bracket of $\frak g(0)$;

(\apro{2}) $xy-yx=x\cdot y$ for all $x\in\frak g(0), y\in\frak g(1)$, where $x\cdot y$ is the action operator of element $x\in\frak g(0)$ applied to $y\in\frak g(1)$;

(\apro{3}) $C$ is central in $H$;

(\apro{4}) some formula $xy-yx=f(x, y)\in\Ue(\frak g(0))$ for $x, y\in\frak g(1)$, see~\cite[Theorem 1.1]{Pr1}.\\
Below we write explicitly Lie algebras $\frak g(0)$ and $\frak g(0)$-modules $\frak g(1)$ for all simple Lie algebras $\frak g$:
$$\mbox{Table~1}$$
$$\begin{array}{|c|c|c|c|c|c|c|c|c|c|}\hline \frak g&A_n&B_n&C_n&D_n&E_6&E_7&E_8&F_4&G_2\\\hline
\frak g(0)&A_{n-2}\oplus\mathbb F& B_{n-2}\oplus A_1&C_{n-1}&D_{n-2}\oplus A_1& A_5&D_6&E_7&C_3&A_1\\\hline
\frak g(1)&R(\pi_1)\oplus R(\pi_1)^*& R(\pi_1)\otimes R(\pi_1)& R(\pi_1)&R(\pi_1)\otimes R(\pi_1)& R(\pi_3)& R(\pi_5)& R(\pi_1)&R(\pi_3)&R(3\pi_1)\\\hline
\end{array}$$
(for Lie algebras and their representations we use notation of~\cite{OV}).
\subsection{Quotient fields and W-algebras} Most associative algebras without zero-divisors affords some embedding into a skew field, but this does not imply that the universal skew field of such an algebra exists (for details see for example~\cite[Chapter 7~\and~p.~486]{Cn}). There is a notion of the left (elements of the form $a^{-1}b$) and the right (elements of the form $ab^{-1}$) skew fields and they do coincide if both exist. This is the case if an algebra in question has no zero-divisors and has finite Gelfand-Kirillov dimension. All algebras in this paper (and thus all $W$-algebras) have no zero-divisors and have finite Gelfand-Kirillov dimension and thus they have the quotient field which is both the left and the right quotient field and is universal in an appropriate sense.

We now are ready to formulate a precise version of our main result.
\begin{theorem}\label{T1}The quotient fields of $\Ue(\frak g)$ and $H\otimes W_d$ are isomorphic, where $W_d:=\mathbb F[z_1,..., z_d, \partial_1,...,\partial_d]$ is a Weyl algebra and $d:=\dim\frak g(1)+1$.\end{theorem}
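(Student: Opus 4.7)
The plan is to strengthen A.~Premet's ``almost-isomorphism'' from~\cite{Pr1} between $\Ue(\frak g)$ and $H\otimes W_d$ into a genuine isomorphism of quotient fields. Premet's construction produces this close relationship via Darboux-type coordinates on the symplectic complement of the Slodowy slice, but the identification takes place only after passing to a completion with respect to the Kazhdan filtration. The key idea is to replace this analytic completion by an algebraic Ore localization, after which the identification becomes one between Ore domains with isomorphic quotient fields.

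Concretely, I would fix the $\frak{sl}_2$-grading $\frak g=\bigoplus_{i=-2}^{2}\frak g(i)$ with $\frak g(\pm 2)=\mathbb F e,\mathbb F f$, and identify $V:=\bigoplus_{i\ne 0}\frak g(i)$ as a symplectic space of dimension $2d$ under the pairing $(x,y)\mapsto\langle[x,y],e\rangle$. Then I would invert $e$ inside $\Ue(\frak g)$: since $\adj e$ acts locally nilpotently, the set $\{e^n\}_{n\ge 0}$ forms an Ore set, and $\Ue(\frak g)[e^{-1}]$ has the same quotient field as $\Ue(\frak g)$. Following Premet's explicit formulas, I would then build an $\mathbb F$-algebra homomorphism between appropriate Ore localizations of $H\otimes W_d$ and $\Ue(\frak g)$: the Weyl algebra generators should be sent to explicit combinations built from $\frak g(\pm 1)\oplus\frak g(\pm 2)$ and powers of $e^{\pm 1}$ realizing Darboux coordinates on $V$, while the generators of $H$ are sent to their Premet-style lifts. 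Verifying that this map respects the Weyl relations and the relations (i)--(iv) of Section~\ref{SW} then reduces to finite commutator calculations inside $\Ue(\frak g)[e^{-1}]$.

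The principal obstacle is upgrading this map from an embedding to an isomorphism of quotient fields. On the one hand, one must show that the ``missing'' generators of $\frak g$ lying in $\frak g(-1)\oplus\frak g(-2)$ can be recovered from the image as rational expressions involving $e^{-1}$; this should follow from Premet's commutator identities once $e$ is inverted. On the other hand, one must show that no unexpected relations arise in the image of $H\otimes W_d$, which is a PBW/filtration argument comparing associated graded algebras on both sides. Once these two verifications are in place, the desired isomorphism of quotient fields follows from a standard Ore-domain argument: an injective homomorphism between Ore domains whose localized images generate a common quotient field must itself induce an isomorphism of that quotient field.
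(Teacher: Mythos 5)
Your proposal misreads the nature of Premet's result and, as a consequence, skips over the one step that actually constitutes the content of the theorem. For the minimal nilpotent element the statement in~\cite[1.5]{Pr1} is already algebraic (no Kazhdan completion needs to be removed): after inverting $e$ one does \emph{not} get a copy of $W_d\otimes H$ inside $\Ue(\frak g)[e^{-1}]$, but only the fixed-point algebra of an explicit involution, i.e.\ $\Ue(\frak g)\approx(W_d\otimes H)^{\tau\otimes\sigma}$. The reason is parity: the Darboux-type coordinates you propose to build out of $\frak g(\pm1)\oplus\frak g(\pm2)$ pair into $\mathbb F e$, so normalizing them to canonical Weyl generators requires $e^{\pm\frac12}$, not integer powers of $e$; these live in a quadratic extension of $\Ue(\frak g)[e^{-1}]$, and the involution $\tau\otimes\sigma$ is exactly the Galois involution of that extension. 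So if you follow ``Premet's explicit formulas'' inside $\Ue(\frak g)[e^{-1}]$, the finite commutator calculations you describe will reproduce the invariant subalgebra, not $W_d\otimes H$ itself, and your closing ``standard Ore-domain argument'' assumes precisely what has to be proved, namely that the image of $H\otimes W_d$ generates the whole quotient field. Nothing in your outline addresses the index-two discrepancy, which is the entire difficulty.

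The paper's proof is devoted to exactly this point: after Lemma~\ref{Ls1} reduces everything to showing $(\mathbb F[z_d,\partial_d]\otimes H)^{\tau'\otimes\sigma}\approx\mathbb F[z_d,\partial_d]\otimes H$ (Proposition~\ref{Pm}), one needs (a) Lemma~\ref{Lsl2inv}, realizing $\sigma$ as the central element of an $SL_2$ acting on $H$ through a generic $\frak{sl}_2$-triple in $\frak g(0)$ --- a case-by-case check via Table~1 which fails in type $A$, where the theorem is instead quoted from~\cite{FMO}; (b) the localization $H[e^{-\frac12}]$ of Section~\ref{SinvH} and the birational splitting $H\approx(\Ue(\{e^{\frac12},h\})\otimes(H^e)_\psi)^{\sigma_e}$; and (c) the explicit birational change of variables $z=y/x$, $\partial_z=x\partial_y$, $\partial_x'=\partial_x+\frac yx\partial_y$ in $W_x\otimes W_y$ (Lemma~\ref{Ldec}), which splits off an untwisted Weyl factor and absorbs the involution. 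Your proposal uses none of this structure and is uniform in the type of $\frak g$, which is itself a warning sign, since the argument genuinely depends on the case analysis behind Lemma~\ref{Lsl2inv} and on the type-$A$ exception. To salvage your approach you would have to explain how to produce, inside $\Ue(\frak g)[e^{-1}]$ with only integer powers of $e$, a commuting pair consisting of a full copy of $H$ and a $d$-variable Weyl algebra generating the quotient field --- and that is not a refinement of Premet's construction but a statement equivalent to the theorem.
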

The proof of Theorem~\ref{T1} goes as follows. First we recall in Section~\ref{SinvW} a result of A.~Premet which states that $\Ue(\frak g)$ is ``almost birationally isomorphic'' to the tensor product of $H$ with a Weyl algebra. The precise statement of this fact goes through some involutions of a Weyl algebra and $H$. We provide another presentation of these involutions in Section~\ref{SinvH} and then complete the proof in Section~\ref{SprT}.

Further we will write $A\approx B$ if the quotient fields of two associative algebras $A$ and $B$ are isomorphic.

\section{Involutions of W-algebras}\label{SinvW} Algebra $H$ has an involution $\sigma$ (see~\cite[2.2]{Pr1}) the action of which on generators is given by the formulas:

(\apro{1}) $\sigma(x)=x$ if $x\in\frak g(0)$ or $x=C$;

(\apro{2}) $\sigma(x)=-x$ if $x\in\frak g(1)$.

The following lemma is a first step in a proof of Theorem~\ref{T1}.
\begin{lemma}We have that $$\Ue(\frak g)\approx (W_d\otimes H)^{\tau\otimes\sigma},$$ where $W_d$ is a Weyl algebra over $\mathbb F$ generated by $z_1,..., z_d, \partial_1,..., \partial_d$ satisfying relations
$$[z_i, \partial_j]=\delta_{ij}~\hspace{10pt}(1\le i, j\le d),$$ $\tau$ is an involution on $W_d$ such that
$$\tau(z_i)=-z_i, \tau(\partial_i)=-\partial_i\hspace{10pt}(1\le  i\le d).$$\end{lemma}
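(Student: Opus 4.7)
The plan is to derive this lemma directly from the theorem of A.~Premet in \cite{Pr1} already recalled at the beginning of this section. That theorem produces, after a suitable Ore localization, an isomorphism between $\Ue(\frak g)$ and the fixed-point subalgebra $(W_d\otimes H)^{\Theta}$ for the order-two automorphism $\Theta$ coming from the Kazhdan $\mathbb Z/2$-grading on $W_d\otimes H$. The rank $d$ of the Weyl algebra equals $\tfrac12(\dim\frak g-\dim\frak g^e)$, and for the minimal nilpotent orbit the $\frak{sl}_2$-grading of $\frak g$ satisfies $\dim\frak g(\pm 2)=1$, so that $d=\dim\frak g(1)+1$, matching the statement of Theorem~\ref{T1}.

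The first step is to identify $\Theta$ with $\tau\otimes\sigma$. On $H$ the Kazhdan parity assigns degree $0$ to $\frak g(0)$ and to the central element $C$, and degree $1$ to $\frak g(1)$; the associated parity involution is therefore exactly $\sigma$ as described at the start of this section. On $W_d$ every generator $z_i$, $\partial_i$ has odd Kazhdan parity, so the parity involution negates each of them, which is precisely $\tau$. Hence $\Theta=\tau\otimes\sigma$, and Premet's result can be rephrased as an isomorphism between a localization of $\Ue(\frak g)$ and a localization of $(W_d\otimes H)^{\tau\otimes\sigma}$.

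The second step is to pass from the localized isomorphism to an isomorphism of quotient fields. Both $\Ue(\frak g)$ and $W_d\otimes H$ are Noetherian domains of finite Gelfand-Kirillov dimension, and the invariant subring $(W_d\otimes H)^{\tau\otimes\sigma}$, being the fixed ring of a finite group of automorphisms of a Noetherian domain in characteristic zero, is itself a Noetherian domain whose quotient field coincides with the fixed subfield of the quotient field of $W_d\otimes H$. Ore localization at an Ore set of non-zero-divisors does not alter the quotient field of any of these domains. Consequently the localized isomorphism from Premet's theorem upgrades to an isomorphism of quotient fields, which is exactly the assertion $\Ue(\frak g)\approx (W_d\otimes H)^{\tau\otimes\sigma}$.

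The main obstacle here is one of translation rather than of mathematics: one has to trace through the conventions of \cite{Pr1} to verify that the involutions in the statement of the lemma do match the Kazhdan parity involutions, and that Premet's Ore-localized isomorphism realises precisely the invariant subring (rather than some Morita-equivalent cover of it) on the other side. Once this bookkeeping is complete, no genuinely new argument is required; the lemma is a direct repackaging of Premet's theorem into the $\approx$-language that is convenient for the subsequent sections.
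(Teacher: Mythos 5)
Your overall strategy---quote Premet's result from \cite[1.5]{Pr1}, match the involutions, and pass to quotient fields---is the same citation-based route the paper takes, and your final step (fixed rings of a finite group in characteristic $0$ and Ore localization do not change quotient fields) is fine. The genuine gap is in your first step: you assume that Premet's theorem already exhibits (a localization of) $\Ue(\frak g)$ as the invariants of an involution on $W_d\otimes H$ with $W_d$ an honest Weyl algebra of the required rank, acted on by $\tau$. It does not. The decomposition in \cite[1.5]{Pr1} contains, besides the Weyl algebra attached to the symplectic space $\frak g(\pm1)$ and the W-algebra $H$, a crossed-product factor $\mathbb F[h]*\langle\Delta\rangle$ built from the grading element $h$ and an Euler-type element $\Delta$, and this factor is not a Weyl algebra. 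The whole mathematical content of the paper's proof, beyond the citation, is the observation that $\mathbb F[h]*\langle\Delta\rangle\approx\mathbb F[z_1,\partial_1]$ via $h\leftrightarrow z_1$, $\Delta\leftrightarrow z_1\partial_1$, compatibly with the involutions, so that this factor contributes the one extra Weyl pair. Your proposal never produces this pair, so the Weyl-algebra form of the statement, the rank $d$, and the fact that $\tau$ negates \emph{all} the generators $z_i,\partial_i$ are not justified by what you cite.

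A symptom of the gap is your parity argument and your bookkeeping for $d$. The claim that every Weyl generator has odd Kazhdan parity cannot apply to the pair coming from $h$ and $\Delta$: the grading element $h$ has degree $0$, and it is only after the birational change of variables above that the involution takes the uniform shape $\tau(z_i)=-z_i$, $\tau(\partial_i)=-\partial_i$ on that pair. Likewise, with $\dim\frak g(\pm2)=1$ one has $\tfrac12(\dim\frak g-\dim\frak g^e)=\tfrac12\dim\frak g(1)+1$, so your asserted identity $\tfrac12(\dim\frak g-\dim\frak g^e)=\dim\frak g(1)+1$ does not hold as written; whatever convention one adopts for $d$, getting the count right means tracking where each Weyl pair comes from ($\tfrac12\dim\frak g(\pm1)$ pairs from the symplectic part, one more from $(h,\Delta)$), and that is exactly the point at which the missing observation enters. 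So while no deep new idea is needed, the lemma is not a pure repackaging of Premet's statement: the $\mathbb F[h]*\langle\Delta\rangle\approx\mathbb F[z_1,\partial_1]$ identification has to be made explicitly, as the paper does.
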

\begin{proof} This follows from the result of~\cite[1.5]{Pr1} and the observation that $\mathbb F[h]*\langle\Delta\rangle\approx\mathbb F[z_1, \partial_1]$ via maps

$$h\leftrightarrow z_1,\hspace{10pt}\Delta\leftrightarrow z_1\partial_1.$$\end{proof}

The next lemma is a useful refinement of the previous one.
\begin{lemma}\label{Ls1}We have that $$\Ue(\frak g)\approx W_{d-1}\otimes(\mathbb F[z_d, \partial_d]\otimes H)^{\tau'\otimes \sigma},$$ where $\mathbb F[z_d, \partial_d]$ is a Weyl algebra in one variable and involution $\tau'$ is determined by the formulas $$\tau'(z_d)=-z_d,\hspace{10pt} \tau'(\partial_d)=-\partial_d.$$\end{lemma}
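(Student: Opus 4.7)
The plan is to combine the previous lemma with an explicit birational change of generators on $W_d$. Writing $W_d=W_{d-1}\otimes\mathbb F[z_d,\partial_d]$ and $\tau=\tau_{d-1}\otimes\tau'$, it suffices to show $(W_d\otimes H)^{\tau\otimes\sigma}\approx W_{d-1}\otimes(\mathbb F[z_d,\partial_d]\otimes H)^{\tau'\otimes\sigma}$. I will do this by rewriting $W_d$ inside $\mathrm{Frac}(W_d)$ as a tensor product of a copy of $W_{d-1}$ consisting of $\tau$-fixed elements and a single rank-one Weyl algebra on which $\tau$ acts as the parity involution.

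Concretely, I would set $Z_i:=z_iz_d^{-1}$ and $P_i:=\partial_iz_d$ for $1\le i\le d-1$, $Z_d:=z_d$, and $P_d:=\partial_d+\sum_{j=1}^{d-1}z_j\partial_jz_d^{-1}$, and verify that $\{Z_i,P_i\}_{i=1}^d$ satisfy the Weyl relations. The only non-obvious commutators are $[Z_i,P_d]$ and $[P_i,P_d]$ for $i<d$: the correction term in $P_d$ is chosen so that $[Z_i,\sum_jz_j\partial_jz_d^{-1}]=z_iz_d^{-2}$ exactly cancels $[Z_i,\partial_d]=-z_iz_d^{-2}$, and a parallel calculation handles $[P_i,P_d]$. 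By construction $\tau(Z_i)=Z_i$, $\tau(P_i)=P_i$ for $i<d$, while $\tau(Z_d)=-Z_d$ and $\tau(P_d)=-P_d$; hence $W_{d-1}':=\mathbb F\langle Z_i,P_i:i<d\rangle\cong W_{d-1}$ lies in the $\tau$-fixed part of $\mathrm{Frac}(W_d)$ and commutes with $\mathbb F\langle Z_d,P_d\rangle\cong\mathbb F[z_d,\partial_d]$, on which $\tau$ acts as the parity involution $\tau'$.

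Since the original generators can be recovered as rational expressions in the new ones (explicitly $z_d=Z_d$, $z_i=Z_iZ_d$, $\partial_i=P_iZ_d^{-1}$ for $i<d$, and $\partial_d=P_d-\sum_{j<d}Z_jP_jZ_d^{-1}$), the two subalgebras $W_d$ and $W_{d-1}'\otimes\mathbb F[Z_d,P_d]$ of $\mathrm{Frac}(W_d)$ have the same skew field of fractions. Therefore $\mathrm{Frac}(W_d\otimes H)=\mathrm{Frac}(W_{d-1}'\otimes\mathbb F[Z_d,P_d]\otimes H)$, and under this identification the involution $\tau\otimes\sigma$ becomes $\mathrm{id}\otimes\tau'\otimes\sigma$. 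Using the standard fact that, for a $\mathbb Z/2$-action on a Noetherian Ore domain, the skew field of invariants coincides with the invariants of the skew field, and using triviality of the action on the first tensor factor, one obtains $\mathrm{Frac}((W_d\otimes H)^{\tau\otimes\sigma})=\mathrm{Frac}(W_{d-1}'\otimes(\mathbb F[Z_d,P_d]\otimes H)^{\tau'\otimes\sigma})$, which together with the previous lemma gives the statement.

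The main obstacle is spotting the correct correction term in $P_d$; once it is in place, the required commutator identities are a routine bookkeeping exercise and the invariants-versus-localisation step relies only on standard results about finite group actions on Ore domains.
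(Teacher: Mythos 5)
Your proposal is correct and follows essentially the same route as the paper: the new generators $Z_i=z_i z_d^{-1}$, $P_i=\partial_i z_d$, $Z_d=z_d$, $P_d=\partial_d+\sum_{j<d}z_j\partial_j z_d^{-1}$ are exactly the birational change of variables used in the paper's proof, after which $\tau$ fixes the $W_{d-1}$-factor pointwise and acts as the parity involution $\tau'$ on $\mathbb F[Z_d,P_d]$. The only difference is that you make explicit the verification of the Weyl relations and the standard compatibility of taking invariants with passing to quotient fields, both of which the paper leaves to the reader.
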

\begin{proof}We consider a birational automorphisms of $W_d$ determined by a pair of maps:
$$z_i\leftrightarrow \frac{z_i}{z_d} (i< d),\hspace{10pt} z_d\leftrightarrow z_d$$
$$\partial_i\leftrightarrow z_d\partial_i (i<d),\hspace{10pt} \partial_d\leftrightarrow \partial_d+\frac1{z_d}(z_1\partial_1+...+z_{d-1}\partial_{d-1}).$$
One can check that the involution $\tau$ on $W_d$ is equivalent via this automorphism to the involution $\tau'$ on $W_{d-1}\otimes\mathbb F[z_d, \partial_d]$ such that $\tau'$ preserves the first factor pointwise and the action of $\tau'$ on $\mathbb F[z_d, \partial_d]$ is given by the formula
$$\tau'(z_d)=-z_d,\hspace{10pt} \tau'(\partial_d)=-\partial_d.$$

Thus we have that $$\Ue(\frak g)\approx (W_d\otimes H)^{\tau'\otimes\sigma}=W_{d-1}\otimes(\mathbb F[z_d, \partial_d]\otimes H)^{\tau'\otimes \sigma}.$$\end{proof}
We would like to mention that a philosophically similar decomposition of $\Ue(\frak g)$ into a tensor product of W-algebra with a Weyl algebra modulo some completion exists for any $W$-algebra, see~\cite{Lo}.

\section{Involutions of $H$ and $\frak{sl}_2$-triples.}\label{SinvH}In this section we relate $\sigma: H\to H$ with some $\frak{sl}_2$-triple in $\frak g(0)$. Namely we prove the following lemma.
\begin{lemma}\label{Lsl2inv} Let $H$ and $\sigma$ be as in Sections~\ref{SW},~\ref{SinvW} and assume that $\frak g$ is not of type $A$. Then there exists an $\frak{sl}_2$-triple $\{e, h, f\}$ such that

(1) the adjoint action of $\{e, h, f\}$ on $H$ is locally finite and thus it could be integrated to the action of an algebraic group $SL_2(\mathbb F)$,

(2) the action of $\sigma$ on $H$ coincides with the action of the only non-unit element of the center of $SL_2(\mathbb F)$ on $H$.\end{lemma}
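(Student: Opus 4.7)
The plan is to reduce the claim to a root-theoretic task: for each simple type $\ne A$, find an $\frak{sl}_2$-triple $\{e,h,f\}\subset\frak g(0)\subset H$ such that $\adj h$ has only even integer eigenvalues on $\frak g(0)$ and only odd integer eigenvalues on $\frak g(1)$. Granting this, the $\frak{sl}_2$-action extends by derivations from the generating set $\frak g(0)\oplus\frak g(1)\oplus\mathbb F C$ to all of $H$, because each of the defining relations (\apro{1})--(\apro{4}) of Section~\ref{SW} is intrinsically $\frak g(0)$-equivariant and therefore $\frak{sl}_2$-equivariant ($C$ being central carries weight $0$). Since the generating set is a finite-dimensional $\frak{sl}_2$-submodule, local finiteness is automatic, and in characteristic $0$ a locally finite $\frak{sl}_2$-action integrates to an algebraic $SL_2(\mathbb F)$-action on $H$ (integrate on each finite-dimensional stable subspace and glue). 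The non-trivial central element $-I\in SL_2(\mathbb F)$ acts on a weight-$k$ vector by $(-1)^k$, which by construction equals $+1$ on $\frak g(0)$ and on $C$, and $-1$ on $\frak g(1)$, matching $\sigma$ on the generators of $H$ and hence on all of $H$.

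The construction of $\{e,h,f\}$ is case-by-case using Table~1. In the $G_2$ case one takes $\frak{sl}_2=\frak g(0)=A_1$ itself, so that $\frak g(1)=R(3\pi_1)$ has weights $\pm 1,\pm 3$. For $B_n$ and $D_n$ one takes the $A_1$-summand of $\frak g(0)$: the other simple summand acts trivially, while $\frak g(1)=R(\pi_1)\otimes R(\pi_1)$ sees only the $A_1$-action on the second factor (standard rep), giving weights $\pm 1$. For $C_n$, $F_4$ and $E_8$ one takes the principal $\frak{sl}_2$ inside $\frak g(0)\in\{C_{n-1},C_3,E_7\}$; the exponents of these Lie algebras are all odd (for $C_m$ they are $1,3,\dots,2m-1$, and for $E_7$ they are $1,5,7,9,11,13,17$), so the adjoint decomposes into odd-dimensional $\frak{sl}_2$-irreducibles with even weights, while a direct weight computation in the relevant $\frak g(1)$ (standard of $C_{n-1}$, the $14$-dimensional $R(\pi_3)$ of $C_3$, the $56$-dimensional minuscule of $E_7$) gives only odd weights. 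For $E_6$ embed $\frak{sl}_2\hookrightarrow A_5$ via $\mathbb F^6\cong\mathbb F^2\otimes\mathbb F^3$; then the adjoint weights of $A_5$ lie in $\{-2,0,2\}$ and $\frak g(1)=\Lambda^3\mathbb F^6$ has weights $\pm 1,\pm 3$, as one sees by enumerating weighted $3$-element subsets of a weight basis. For $E_7$ embed $\frak{sp}_2\hookrightarrow\frak{so}_{12}=D_6$ from $\mathbb F^{12}\cong\mathbb F^2\otimes\mathbb F^6$; the identity $\Lambda^2(V\otimes W)=\Lambda^2V\otimes S^2W\oplus S^2V\otimes\Lambda^2W$ makes the adjoint weights manifestly even, and the standard spinor weight formula gives odd weights on the half-spin $\frak g(1)$.

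The main obstacle is the verification of odd weights on $\frak g(1)$ in the two exceptional cases $E_7$ and $E_8$. For $E_8$ this reduces to a minuscule-weight argument: the $56$-dimensional rep of $E_7$ has weight set $W\cdot\omega$ where the highest weight satisfies $\omega(2\rho^\vee)=27$ (odd, computable from Bourbaki's expression $\omega=\alpha_1+\tfrac{3}{2}\alpha_2+2\alpha_3+3\alpha_4+\tfrac{5}{2}\alpha_5+2\alpha_6+\tfrac{3}{2}\alpha_7$ in the simple-root basis), while every other weight is $\omega-\sum n_i\alpha_i$ for non-negative integers $n_i$ with each $\alpha_i(2\rho^\vee)=2$; hence all weights of $\frak g(1)$ are odd. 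Once the $\frak{sl}_2$-triples are in hand, the remaining arguments about extending the action, local finiteness, integration to $SL_2(\mathbb F)$ and the identification of $-I$ with $\sigma$ proceed as sketched in the first paragraph.
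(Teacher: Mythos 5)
Your proposal is correct and at bottom follows the same strategy as the paper: reduce statement (2) to the assertion that $\adj h$ has even eigenvalues on $\frak g(0)$ and on $C$ and odd eigenvalues on $\frak g(1)$, and verify this from Table~1 case by case; statement (1) and the integration to $SL_2(\mathbb F)$ are handled the same way in both arguments. The difference is in the choice of triple: the paper takes the principal (``generic'') $\frak{sl}_2$-triple of $\frak g(0)$ uniformly in every case and leaves the Table~1 verification entirely to the reader, whereas you tailor the triple to each type (the $A_1$-summand for $B_n,D_n$, the principal one for $C_n,F_4,E_8$, non-principal embeddings for $E_6,E_7$) and actually carry out the checks. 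Your choices all work, and your $2\rho^\vee$-parity argument for the $56$-dimensional $E_7$-module is exactly the clean way to do the $E_8$ case (it would equally dispose of every other case for the principal triple, since roots evaluate to even integers on the principal $h$). The one place where your write-up needs an extra sentence is $E_7$: for the embedding $\frak{sl}_2\subset\frak{sp}_2\times\frak{sp}_6\subset\frak{so}_{12}$ coming from $\mathbb F^{12}\cong\mathbb F^2\otimes\mathbb F^6$, one half-spin module of $D_6$ acquires odd weights and the other even weights (with $\epsilon_i(h)=1$ the weight $\tfrac12\sum\pm\epsilon_i$ is odd precisely when the number of minus signs is even), and since the partition $[2^6]$ is very even there are two $SO_{12}$-classes of such triples, swapped by the outer automorphism together with the two half-spin modules; one must therefore pick the class for which the specific module $R(\pi_5)$ of Table~1 gets odd weights, which is possible and suffices for the existence statement. (The principal triple of $D_6$ avoids this ambiguity altogether: there $\epsilon(h)=(10,8,6,4,2,0)$ and every half-spin weight has the parity of $5+4+3+2+1$, hence is odd for both half-spin modules.)
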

\begin{proof} We claim that a suitable $\frak{sl}_2$-triple is an $\frak{sl}_2$-triple with a generic $e$. Thus we pick an $\frak{sl}(2)$-triple $\{e, h, f\}$ with a generic $e$. It is clear that such an $\frak{sl}_2$-triple preserve both spaces $\frak g(0)$ and $\frak g(1)$, and commute with $C$. Thus $\{e, h, f\}$ acts locally finitely on $H$. Statement (2) follows from a case-by-case checking via Table~1.\end{proof}

Now we will use an $\frak{sl}_2$-triple in $H$ to ``almost'' decompose $H$ as a tensor product of two algebras. We do this in several steps and the first one is as follows.

\subsection{Some localizations of $H$} The goal of this subsection is to define an extension of $H$ by an element $e^{-\frac12}$ together with a natural involution $e^{-\frac12}\to-e^{-\frac12}$ in a proper way (the result would be called $H[e^{-\frac12}]$).

 Let $H$ be an associative algebra and $e\in H$ be such that operator $$\adj e: H\to H\hspace{10pt}(x\to [e, x]:=ex-xe)$$ acts locally nilpotently on $H$. Then we set $\widehat{(H, ad e, \frac12)}$ to be the algebra which is as a vector space a direct sum of $(\widehat{(\mathbb Z, \frac12)}:=\mathbb Z\sqcup\{\mathbb Z+\frac12\})$-copies of $H$ with multiplication
$$(xt^n)\cdot(yt^m)=\sum\limits_{i=0}^{\infty}[{\small \left(\begin{array}{c}n\\i\end{array}\right)} x (\adj e)^i(y)t^{m+n-i}]$$
for $x, y\in H, m, n\in\widehat{(\mathbb Z, \frac12)}$, where we denote by $xt^n$ the element $x\in H$ in the $n$-th copy of $H$. We wish to note that even if formally the sum above is infinite it is essentially finite as $\adj e$ is a locally nilpotent operator. One can explicitly check that algebra $\widehat{(H, ad e, \frac12)}$ is associative.

One can think about $\widehat{(H, ad e, \frac12)}$ as about some power series extension of $H$/power series extension of a localization of $H$.

As a next step we figure out some two-sided ideal of $\widehat{(H, ad e, \frac12)}$, namely it is an ideal generated by $(e-t)$ but it is quite useful to write it down more explicitly. The following lemma is straightforward.
\begin{lemma}\label{Liv}a) Vector space $I$ spanned by
$$xt^{n+m}-xe^nt^m,\hspace{10pt}\mbox{for~all~}n\in\mathbb Z,~m\in\widehat{(\mathbb Z, \frac12)}$$
is a two-sided ideal of $\widehat{(H, ad e, \frac12)}$,

b) $I$ is generated as a two-sided ideal by $(e-t)$.
\end{lemma}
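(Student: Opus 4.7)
The plan is to show that $I$ equals the two-sided ideal $(t-e)\widehat{(H,\adj e,\frac12)}$, from which both parts (a) and (b) follow at once (the ideals generated by $(e-t)$ and $(t-e)$ coincide up to sign). The key observation is that $(t-e)$ is central in $\widehat{(H,\adj e,\frac12)}$.

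To verify centrality, I would compute the commutator of $(t-e)$ with a general basis element $yt^m$ ($y\in H$, $m\in\widehat{(\mathbb Z,\frac12)}$) using the multiplication rule. The infinite sums truncate immediately: $\binom{1}{i}=0$ for $i\ge 2$, and both $(\adj e)^i(1)$ and $(\adj e)^i(e)$ vanish for $i\ge 1$. This yields
$$t\cdot(yt^m)=yt^{m+1}+[e,y]t^m,\qquad e\cdot(yt^m)=ey\,t^m,$$
$$(yt^m)\cdot t=yt^{m+1},\qquad (yt^m)\cdot e=ye\,t^m.$$
Subtracting, one finds $(t-e)(yt^m)=yt^{m+1}-ye\,t^m=(yt^m)(t-e)$, so $(t-e)$ commutes with every basis element and is therefore central. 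In particular, specializing to $y=e$ and $m=0$ shows that $t$ and $e$ themselves commute inside $\widehat{(H,\adj e,\frac12)}$.

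Next I would prove the equality $I=(t-e)\widehat{(H,\adj e,\frac12)}$. For the inclusion $\subseteq$, the multiplication rule gives $xt^n\cdot t^m=xt^{n+m}$, so (for $n\ge 0$) each generator factors as
$$xt^{n+m}-xe^nt^m=x(t^n-e^n)t^m=x(t-e)(t^{n-1}+t^{n-2}e+\cdots+e^{n-1})t^m,$$
where the commutativity of $t$ and $e$ is used in the second equality; centrality of $(t-e)$ then rewrites the whole expression as a left multiple of $(t-e)$. For the reverse inclusion, an arbitrary element of $(t-e)\widehat{(H,\adj e,\frac12)}$ is a finite sum of terms $(t-e)(yt^m)=yt^{m+1}-ye\,t^m$, each of which is itself a generator of $I$ (with $n=1$). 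Since, by centrality, $(t-e)\widehat{(H,\adj e,\frac12)}$ is the two-sided ideal generated by $(t-e)$, parts (a) and (b) follow together.

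The only real obstacle is executing the multiplication rule accurately when checking centrality, but the rapid truncation of the binomials $\binom{1}{i}$ and of the iterates $(\adj e)^i(1)$, $(\adj e)^i(e)$ reduces this to a short and transparent calculation. Once $(t-e)$ is known to be central and $t,e$ are known to commute, the standard factorization of $t^n-e^n$ finishes the argument.
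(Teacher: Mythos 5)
Your proposal is correct, and it is a complete argument: the paper simply declares Lemma 4.4 ``straightforward'' and records no proof, so there is nothing in the text to compare against beyond the expected direct computation. Your route --- first checking via the truncated multiplication rule that $t\cdot(yt^m)=yt^{m+1}+[e,y]t^m$, $e\cdot(yt^m)=ey\,t^m$, $(yt^m)\cdot t=yt^{m+1}$, $(yt^m)\cdot e=ye\,t^m$, hence that $(t-e)$ is central, and then identifying $I$ with $(t-e)\widehat{(H,\adj e,\frac12)}$ by the factorization $t^n-e^n=(t-e)(t^{n-1}+\dots+e^{n-1})$ in one direction and by recognizing $(t-e)(yt^m)=yt^{m+1}-ye\,t^m$ as the $n=1$ generator in the other --- is exactly the kind of verification the author leaves to the reader, and it has the merit of delivering (a) and (b) simultaneously: centrality makes the principal left ideal two-sided and equal to the two-sided ideal generated by $e-t$. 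The only caveat is the one you already noticed in passing: the spanning set in the statement is written with $n\in\mathbb Z$, but $e^n$ only makes sense in $H$ for $n\ge 0$, so the generators must be read with $n\in\mathbb Z_{\ge0}$ (the case $n=0$ being trivially zero); with that reading your inclusion argument covers all generators, and your appeal to unitality of $\widehat{(H,\adj e,\frac12)}$ (unit $1\cdot t^0$) in identifying the principal ideal with the two-sided ideal generated by $(e-t)$ is legitimate.
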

We denote by $H[e^{\pm\frac12}]$ quotient $\widehat{(H, ad e, \frac12)}/I$ and by $ev$ the respective map $ev: \widehat{(H, ad e, \frac12)}\to H[e^{-\frac12}]$. The reason for this notation is that $H[e^{-\frac12}]$ is generated by $$ev(H), ev(t^{\frac12}), ev(t^{-\frac12})$$ and $ev(t)=ev(e)$. The following lemma is implied by Lemma~\ref{Liv}.
\begin{lemma} a) For any $x\in H[e^{-\frac12}]$ there exists $x_1, x_2\in H, n\in\mathbb Z$ such that $$x=ev(x_1t^{-n}+x_2t^{-n-\frac12}).$$
b) For any $x_1, x_2, x_1', x_2'\in H, n\in\mathbb Z$ we have that
$$ev(x_1t^{-n}+x_2t^{-n-\frac12})=ev(x_1't^{-n}+x_2't^{-n-\frac12})$$
if and only if for some $m\in\mathbb Z_{\ge0}$ we have that $(x_1-x_1')e^m=(x_2-x_2')e^m=0$.\end{lemma}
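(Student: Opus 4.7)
The plan has three stages. First, observe that the ideal $I$ respects the $\mathbb Z$ vs $\mathbb Z+\tfrac12$ decomposition of $\widehat{(H,\adj e,\tfrac12)}$: every spanning element $xt^{n+m}-xe^nt^m$ lies entirely in one graded piece (determined by whether $m$ is an integer or a half-integer), since the shift $n$ is an integer. Hence $I=I_{\mathbb Z}\oplus I_{\mathbb Z+\frac12}$, and the membership $(x_1-x_1')t^{-n}+(x_2-x_2')t^{-n-\frac12}\in I$ decouples into two independent conditions, one on each graded piece. Both parts of the lemma therefore reduce to statements about a single grading piece at a time.

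Part (a) will follow by a routine common-denominator calculation. Given a finite representative $\sum_k x_k t^k$, pick $n$ large enough that $k+n\ge 0$ for all integer $k$ with $x_k\ne 0$ and $k+n+\tfrac12\ge 0$ for all half-integer $k$ with $x_k\ne 0$. The spanning element with parameters $(n',m')=(k+n,-n)$ identifies $x_kt^k$ with $x_k e^{k+n}\,t^{-n}$ modulo $I$, and analogously in the half-integer case; collecting contributions yields the required presentation $ev(x_1t^{-n}+x_2t^{-n-\frac12})$. The ``if'' direction of part (b) is the same observation run backwards: if $ye^m=0$ in $H$, then the spanning element with $(n',m')=(m,-n-m)$ is exactly $yt^{-n}-ye^m t^{-n-m}=yt^{-n}$, so this element already lies in $I$; likewise in the half-integer part.

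The main obstacle is the ``only if'' direction of (b), where I must extract the concrete annihilation $ye^m=0$ in $H$ from the purely abstract membership $yt^{-n}\in I_{\mathbb Z}$. My plan is a generating-function argument. Start from any expression $yt^{-n}=\sum Y_{n',m'}(t^{n'+m'}-e^{n'}t^{m'})$ with $Y_{n',m'}\in H$, $n'\ge 0$, $m'\in\mathbb Z$, only finitely many nonzero, and introduce a formal variable $S$ commuting with every element of $H$. Form
\[
Q(S,T)=\sum_{n',m'} Y_{n',m'}\,S^{m'}T^{n'}\in H[T][S^{\pm 1}].
\]
Comparing coefficients of $t^k$ on both sides translates the ideal equation into the polynomial identity $Q(S,S)-Q(S,e)=yS^{-n}$. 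Since $S-e$ commutes with everything in sight and divides each $S^{n'}-e^{n'}$, I can factor $Q(S,S)-Q(S,e)=(S-e)R(S,e)$ for some Laurent polynomial $R(S,e)=\sum_l R_l(e)S^l$ with only finitely many nonzero coefficients $R_l(e)\in H$. Reading off $S$-coefficients in $(S-e)R(S,e)=yS^{-n}$ yields the recursion $R_{k-1}(e)=R_k(e)\cdot e$ for $k\ne -n$ together with $R_{-n-1}(e)=R_{-n}(e)\cdot e+y$. Descending from the highest nonzero index forces $R_k(e)=0$ for all $k\ge-n$, hence $R_{-n-1-l}(e)=ye^l$ for $l\ge 0$; since the support of $R$ is finite, this gives $ye^m=0$ for $m$ sufficiently large, as required. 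The half-integer case is identical after shifting all $m'$-indices by $\tfrac12$.
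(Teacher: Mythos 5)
Your proposal is correct in substance, and it supplies exactly the details the paper omits: the paper gives no argument beyond the remark that the lemma ``is implied by'' the preceding lemma describing the spanning set of $I$, and your proof is a careful exploitation of that spanning set --- the splitting $I=I_{\mathbb Z}\oplus I_{\mathbb Z+\frac12}$ (each spanning element $xt^{n+m}-xe^nt^m$ sits in a single coset because $n$ is an integer), the common-denominator reduction for (a), and the observation that $ye^m=0$ turns $yt^{-n}$ itself into a spanning element for the ``if'' half of (b) are the routine steps the paper has in mind, while your generating-function treatment of the ``only if'' half is where the real content lies, and it does work. One correction is needed there: the phrase ``since $S-e$ commutes with everything in sight'' is false --- only the formal variable $S$ is central, whereas $e\in H$ need not commute with the coefficients $Y_{n',m'}$ --- so you cannot pull $(S-e)$ out on the left as in $(S-e)R(S,e)$. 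What is true, using only centrality of $S$, is $S^{n'}-e^{n'}=\bigl(\sum_{i=0}^{n'-1}S^{i}e^{n'-1-i}\bigr)(S-e)$, which yields the right-sided factorization $Q(S,S)-Q(S,e)=R(S)\,(S-e)$ with $R(S)=\sum_l R_lS^l$, $R_l\in H$. Comparing coefficients of $S^k$ in $R(S)(S-e)=yS^{-n}$ gives $R_{k-1}=R_k e$ for $k\ne -n$ and $R_{-n-1}=R_{-n}e+y$, which is precisely the recursion you wrote (with $e$ acting by right multiplication); the descent then gives $R_k=0$ for $k\ge -n$, hence $R_{-n-1-l}=ye^{l}$, and finiteness of the support of $R$ forces $ye^{m}=0$ with $e$ on the right, exactly as the statement requires. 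So the slip is purely one of ordering and justification; once the factorization is taken on the correct side, your argument is complete, and it is the natural fleshing-out of the proof the paper leaves to the reader.
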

\begin{corollary}If $H$ has no zero-divisors then $ev|_H: H\to H[e^{-\frac12}]$ is injective.\end{corollary}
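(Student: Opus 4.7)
The plan is to apply part (b) of the preceding lemma directly. Suppose $x\in H$ lies in the kernel of $ev|_H$. Since $H$ sits inside $\widehat{(H, \adj e, \frac12)}$ as the $0$-th copy, $x$ is represented by $xt^0$, so the hypothesis reads
$$ev(xt^{0}+0\cdot t^{-\frac12})=ev(0\cdot t^{0}+0\cdot t^{-\frac12}).$$
Part (b) of the previous lemma (applied with $n=0$, $x_1=x$, $x_2=0$ and $x_1'=x_2'=0$) then furnishes an integer $m\ge 0$ such that $xe^{m}=0$ in $H$.

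The remaining step is to observe that $e^{m}\neq 0$ in $H$. By Lemma~\ref{Lsl2inv} the element $e$ comes from an $\frak{sl}_2$-triple in the semisimple Lie algebra $\frak g(0)\subset H$; in particular $e$ is a nonzero element of $H$. Since $H$ has no zero-divisors by assumption, induction on $m$ gives $e^{m}\neq 0$ for every $m\ge 0$.

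Combining these two observations, $xe^{m}=0$ with $e^{m}\neq 0$ forces $x=0$ again by the no-zero-divisor hypothesis, so $ev|_H$ is injective. There is no serious obstacle here: the result is a one-line consequence of the preceding lemma once one notes that $e\in H$ is nonzero; the only point worth stressing is that the hypothesis ``no zero-divisors'' is used twice, first to propagate non-vanishing from $e$ to all powers $e^{m}$, and then to cancel $e^{m}$ from $xe^{m}=0$.
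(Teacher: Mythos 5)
Your argument is correct and is exactly the intended one: the paper states the corollary as an immediate consequence of part (b) of the preceding lemma (taking $n=0$, $x_2=x_1'=x_2'=0$ to get $xe^m=0$ and cancelling $e^m\neq 0$ using the no-zero-divisor hypothesis), which is precisely what you do. Your extra remark that $e\neq 0$ (being part of an $\frak{sl}_2$-triple) is a harmless and reasonable point to make explicit.
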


It is clear that the algebra $H$ defined in Section~\ref{SW} has no zero-divisors and thus we can identify $H$ with it's image in $H[e^{-\frac12}]$. We also prefer to avoid whenever possible notation $ev(t^n)$ using $e^n$ instead. We left to mention that the linear map
$$\sigma_t(x_1t^m)\to (-1)^{2m}x_1t^m$$is an involution of algebra $\widehat{(H, ad e, \frac12)}$ which preserves $I$. Thus $\sigma_t$ defines an involution $\sigma_e: H[e^{-\frac12}]\to H[e^{-\frac12}]$ of $H[e^{-\frac12}]$ such that $\sigma_e(e^{-\frac12})=-e^{\frac12}$ and $\sigma_e$ preserves $H$ pointwise.
\subsection{``Almost'' decomposition of $H$}
Let $H$ be an associative algebra with no zero-divisors, $\{e, h, f\}\subset H$ be an $\frak{sl}_2$-triple, $\sigma: H\to H$ be an involution of $H$ satisfying conditions (1) and (2) of Lemma~\ref{Lsl2inv}. Then $\sigma$ gives rise to the automorphism $\widehat\sigma$ of $\widehat{(H, ad e, \frac12)}$ such that $$\widehat\sigma|_H=\sigma,\hspace{10pt} \widehat\sigma(t)=t.$$ This automorphism preserves $I$ and hence defines an automorphism (which we also denote $\sigma$) of $H[e^{-\frac12}]$.

\begin{lemma}\label{L1} The quotient field of $H$ is generated by $H^e, e$ and $h$.\end{lemma}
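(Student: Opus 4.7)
The plan is to exploit the locally nilpotent derivation $D:=\mathrm{ad}\,e$ on $H$ and to produce inside the quotient field an explicit ``slice'', i.e.\ an element $\tilde f$ with $D(\tilde f)=1$. Once such a slice is available, a non-commutative version of the slice theorem for locally nilpotent derivations will show that every element of $H[e^{-1}]$ is a polynomial in $\tilde f$ with coefficients in $\ker D = H^{e}[e^{-1}]$, which forces the skew subfield of the quotient field of $H$ generated by $H^{e}$, $e$ and $h$ to coincide with the full quotient field.

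Concretely, the first step is to form the Ore localization $H[e^{-1}]$ inside the quotient field of $H$: $H$ is a domain and $\mathrm{ad}\,e$ is locally nilpotent on it by Lemma~\ref{Lsl2inv}(1), so the Ore condition at $e$ holds, and $D$ extends to a locally nilpotent derivation on $H[e^{-1}]$ whose kernel is exactly $H^{e}[e^{-1}]$. The second step is the slice itself: set $\tilde f := -\tfrac{1}{2}\,h\,e^{-1}$. Using only the $\mathfrak{sl}_{2}$-relation $[e,h]=-2e$ together with $[e,e^{-1}]=0$, one computes $D(\tilde f) = [e,\tilde f] = 1$.

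The third step is the associative slice lemma: if $B$ is an associative $\mathbb F$-algebra, $D$ a locally nilpotent derivation on $B$, and $s\in B$ satisfies $D(s)=1$, then $B = \langle B^{D}, s\rangle$ as an algebra. I would prove this by induction on the nilpotency index $n(x):=\min\{k\ge 0:D^{k+1}(x)=0\}$. Given $x$ with $n(x)=n\ge 1$, the auxiliary element $y := \tfrac{1}{n!}\,s^{n}\,D^{n}(x)$ lies in $\langle B^{D},s\rangle$ because $D^{n}(x)\in B^{D}$, and the Leibniz identity $D^{k}(s^{n}\cdot a) = D^{k}(s^{n})\cdot a$ for $a\in B^{D}$, together with $D^{k}(s^{n})=\frac{n!}{(n-k)!}\,s^{n-k}$ (which holds because $D(s)=1$ is central), yields $D^{n}(x-y)=0$; so $n(x-y)<n$ and the induction closes.

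Applying this lemma to $B = H[e^{-1}]$ with $s = \tilde f$, and noting that $\tilde f\in\mathbb F[h,e^{-1}]$ and $e\in H^{e}$, gives $H[e^{-1}] \subseteq K$, where $K$ is the skew subfield generated by $H^{e}$, $e$ and $h$. Since the quotient field of $H[e^{-1}]$ coincides with that of $H$, the claimed equality of skew fields follows. The main obstacle is the non-commutative slice lemma, which is not as standard as its commutative analog; the key technical point making the argument succeed is that $D(s)=1$ is a scalar and therefore central, so all potential commutator corrections in the Leibniz expansion of $D^{k}(s^{n})$ vanish and the classical argument transfers without modification.
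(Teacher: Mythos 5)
Your proof is correct, but it takes a genuinely different route from the paper's. The paper argues through the $\frak{sl}_2$-triple itself: since the Casimir $\theta=h^2+2(ef+fe)$ lies in $H^e$, the element $f=\frac{\theta-h^2-2h}{4}e^{-1}$ lies in the skew subfield generated by $H^e$, $e$, $h$, and then $\frak{sl}_2$-representation theory (a locally finite module is spanned by lowering operators applied to highest weight vectors) gives that $H^e$ and $f$ generate $H$, hence its quotient field. You bypass $f$ and the Casimir entirely: you exhibit the slice $\tilde f=-\tfrac12 he^{-1}$ with $[e,\tilde f]=1$ and prove a non-commutative slice theorem for the locally nilpotent derivation $\adj e$ on $H[e^{-1}]$, concluding $H[e^{-1}]=\langle H^e[e^{-1}],\tilde f\rangle$. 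Your induction on the nilpotency index is sound: the identities $D^k(s^na)=D^k(s^n)a$ for $a\in B^D$ and $D^k(s^n)=\frac{n!}{(n-k)!}s^{n-k}$ use only that $D(s)=1$ is central, and the passage from $H\subseteq H[e^{-1}]\subseteq K$ to equality of skew fields is legitimate because $K$ is closed under inverses and every element of the quotient field of $H$ has the form $a^{-1}b$ with $a,b\in H$. What the paper's argument buys is brevity, at the cost of invoking the full triple and Kostant-style $\frak{sl}_2$ theory; what yours buys is independence from representation theory and from $f$ altogether (only $e$, $h$ with $[e,h]=-2e$ and local nilpotency of $\adj e$ are used), plus the sharper algebra-level conclusion that $H[e^{-1}]$ itself, not merely its quotient field, is generated by $H^e[e^{-1}]$ and $he^{-1}$.
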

\begin{proof}The casimir $\theta=h^2+2(ef+fe)$ of $\Ue(\frak{sl}(2))=\Ue(\{e, h, f\})$ is contained in $H^e$ and hence $$f=\frac{\theta-h^2-2h}4e^{-1}$$ is contained in the envelop of $\{H^e, h, e\}$ in the quotient field of $A$. It follows from the $\frak{sl}(2)$-representation theory that $H^e$ and $f$ generate $H$ and thus generate the quotient field of $H$.\end{proof}
We have a natural adjoint action of $h$ on $H^e$. This action induces the natural nonnegative grading $\{(H^e)_i\}_{i\in\mathbb Z_{\ge0}}$ on $H^e$. We consider the map $\psi: H^e\to H[e^{-\frac12}]$ such that $a\to ae^{-\frac i2}$ for any $a\in(H^e)_i$. Clearly $\psi$ is a homomorphism of algebras and the image of $\psi$ commutes both with $e$ and $h$. We denote the image of $\psi$ by $(H^e)_\psi$.
\begin{lemma} Elements $(H^e)_\psi, e^{\frac12}, h$ generates the quotient field of $H[e^{-\frac12}]$. Moreover $$H[e^{-\frac12}]\approx\Ue(\{e^{\frac12}, h\})\otimes(H^e)_\psi.$$\end{lemma}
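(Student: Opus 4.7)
The plan is to first establish the generation claim, then build a multiplication map from the tensor product into $H[e^{-\frac12}]$ and verify that it induces an isomorphism of quotient fields.

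The first part follows from Lemma~\ref{L1}: the quotient field of $H$ is generated by $H^e$, $e$ and $h$, so the quotient field of $H[e^{-\frac12}]$ is generated by $H^e$, $e$, $h$ and $e^{\frac12}$. Using $e=(e^{\frac12})^2$ and, for each $h$-homogeneous $a\in (H^e)_i$, the identity $a=\psi(a)\cdot e^{\frac{i}{2}}$, one sees that all of $H^e$ already lies in the subalgebra generated by $(H^e)_\psi$ and $e^{\frac12}$. This gives the generation claim.

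For the $\approx$ statement I would set up the multiplication map
\[
\mu:\Ue(\{e^{\frac12},h\})\otimes (H^e)_\psi\to H[e^{-\frac12}],\qquad u\otimes a\mapsto ua.
\]
Well-definedness amounts to checking that $(H^e)_\psi$ commutes with both $e^{\frac12}$ and $h$ in $H[e^{-\frac12}]$. Commutation with $e^{\frac12}$ follows from the construction of $\widehat{(H,\adj e,\frac12)}$: since $\adj e$ kills $H^e$, only the $i=0$ term survives in the expansion of $t^{\frac12}\cdot a$. Commutation with $h$ is the weight cancellation $[h,ae^{-\frac{i}{2}}]=ia\cdot e^{-\frac{i}{2}}-ia\cdot e^{-\frac{i}{2}}=0$ for $a\in(H^e)_i$. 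Together with $[h,e^{\frac12}]=e^{\frac12}$ this shows $\mu$ is a well-defined algebra homomorphism, and surjectivity of the induced map on quotient fields is immediate from the first part. Everything thus reduces to proving that $\mu$ is injective.

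For injectivity I would exploit the $\adj h$-weight grading on $H[e^{-\frac12}]$: both $(H^e)_\psi$ and $h$ sit in weight $0$ while $e^{\frac12}$ has weight $1$, so the PBW basis $\{(e^{\frac12})^k h^j\}$ makes $A:=\Ue(\{e^{\frac12},h\})\otimes(H^e)_\psi$ graded by weight, and $\mu$ respects the grading. Since $e^{\frac12}$ is invertible in $H[e^{-\frac12}]$, injectivity of $\mu$ reduces to injectivity of the map $(H^e)_\psi[h]\to H[e^{-\frac12}]$, i.e., to showing that $1,h,h^2,\ldots$ are linearly independent over $(H^e)_\psi$. This last point is the main technical obstacle; I would handle it by passing to the associated graded of $H$ with respect to the Kazhdan filtration, whose commutative associated graded (the coordinate ring of the Slodowy slice) makes the images of $(H^e)_\psi$ and of $h$ algebraically independent, so that any nontrivial linear relation $\sum h^j a_j=0$ in $H[e^{-\frac12}]$ would descend to a nontrivial polynomial relation on the associated graded and fail. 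Once injectivity is in hand, $\mu$ embeds $A$ as a subalgebra whose quotient field equals that of $H[e^{-\frac12}]$, giving $A\approx H[e^{-\frac12}]$.
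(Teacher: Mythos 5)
Your reduction steps match the paper's: generation via Lemma~\ref{L1}, the multiplication map, and the use of the $\adj h$-weight decomposition together with invertibility of $e^{\frac12}$ to reduce everything to showing that $1,h,h^2,\dots$ are linearly independent over $(H^e)_\psi$ inside $H[e^{-\frac12}]$. But that last claim is the entire content of the lemma, and your argument for it has a genuine gap. You invoke the Kazhdan filtration and ``the coordinate ring of the Slodowy slice'' to assert that the images of $(H^e)_\psi$ and $h$ are algebraically independent. First, this is out of context: the triple $\{e,h,f\}$ here is a generic $\frak{sl}_2$-triple inside $\frak g(0)\subset H$, not the minimal nilpotent defining the W-algebra, and the elements of $(H^e)_\psi$ are of the form $ae^{-\frac i2}$, living in the localization $H[e^{-\frac12}]$ with half-integer powers of $e$; the Kazhdan filtration is not defined there without further work, and the lemma is in any case stated for an abstract algebra $H$ with a locally finite $\frak{sl}_2$-action, where no such filtration is available. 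Second, even granting a good filtration, ``both sit in a commutative associated graded'' does not yield algebraic independence of the symbol of $h$ over the symbols of $(H^e)_\psi$ --- that independence is exactly what must be proved, and would itself require an argument (e.g.\ a Poisson-derivation argument using $\{\bar e,\bar h\}=2\bar e$ while $\{\bar e,\cdot\}$ kills the symbols of $H^e$); moreover a nontrivial relation $\sum h^j a_j=0$ need not pass to a nontrivial relation of symbols without a separate leading-term analysis, since top-degree terms can cancel.

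The paper closes this gap with a short self-contained trick that you are missing: take a counterexample $\sum_{j\le s} H_j h^j=0$ of minimal $h$-degree with $H_s\ne0$, note that $e^{-1}p(h)e=p(h+2)$ while conjugation by $e$ fixes every element of $(H^e)_\psi$, and subtract the original relation from its conjugate to obtain $2sH_sh^{s-1}+(\dots)h^{s-2}+\dots=0$, a nontrivial relation of strictly smaller degree --- contradiction. This is in effect the rigorous form of the independence you tried to get from the associated graded (a difference-operator version of the derivation argument), and it works verbatim in the localized, abstract setting, so the detour through the Kazhdan filtration buys nothing; as written, your proof of the key step does not stand.
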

\begin{proof} First statement is implied by Lemma~\ref{L1}. Thus we now focus on the second statement. It is clear that we have a natural map
$$\gamma:~\Ue(\{e^{\frac12}, h\})\otimes(H^e)_\psi\to H[e^{-\frac12}]$$and it is enough to prove that $\gamma$ is injective. Hence it is enough to show that $$\sum\limits_{i, j\in \mathbb Z_{\ge0}}e^{i/2}h^j H_{ij},\hspace{10pt}H_{ij}\in (H^e)_\psi,$$equals zero if and only if $H_{ij}=0$ for all $i, j$. The action of $\adj h:=[h, \cdot]$ on both $\Ue(\{e^{\frac12}, h\})\otimes(H^e)_\psi$ and $H[e^{-\frac12}]$ is semisimple and it is clear that the $i$-th eigenspaces will maps to $i$-th eigenspace. Thus it is enough to show that
$$\sum\limits_{j\in \mathbb Z_{\ge0}}h^j H_{j},\hspace{10pt}H_j\in (H^e)_\psi$$
equals zero if and only if $H_{j}=0$ for all $i, j$. Assume on the contrary that there exist $H_0,..., H_s$ such that

$$H_0h^0+...+H_sh^s=h^0(H_0)+...+h^s(H_s)=0$$and not all $H_0,..., H_s$ equal 0. Without loss of generality we can assume that

1) $H_0\ne0$, $H_s\ne0$,

2) $s$ is the smallest possible among all such expressions.\\
Under these conditions we have that $s>0$. To proceed we need the following simple lemma a proof of which is left to the reader.
\begin{lemma}We have $e^{-1}he=h+2$. For any polynomial $p$ we have $e^{-1}p(h)e=p(h+2)$.\end{lemma}
According to this lemma we have
$$0=e^{-1}(H_sh^s+...+H_0h^0)e-(H_sh^s+...+H_0h^0)=2sH_s h^{s-1}+(...)h^{s-2}+....$$
It is clear that this new expression is of the same form but of smaller $h$-degree. This is a contradiction.
\end{proof}

\begin{corollary} We have that $H\approx (\Ue(\{e^{\frac12}, h\})\otimes (H^e)_\psi)^{\sigma_e}$ where $$\sigma_e(e^\frac12)=-e^\frac12, \sigma_e(h)=h,\hspace{10pt}\sigma_e(\psi(a))=(-1)^i\psi(a)\mbox{~for~}a\in (H^e)_i.$$\end{corollary}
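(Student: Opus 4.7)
The plan is to deduce the corollary from the preceding lemma by taking $\sigma_e$-invariants on both sides of the quotient-field isomorphism $H[e^{-1/2}]\approx \Ue(\{e^{1/2},h\})\otimes (H^e)_\psi$, and then to identify the $\sigma_e$-fixed subalgebra of $H[e^{-1/2}]$ with $H$ up to $\approx$.

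First I would check that the isomorphism $\gamma$ of quotient fields built in the preceding lemma intertwines the two involutions both called $\sigma_e$: on $H[e^{-1/2}]$ the involution fixes $H$ pointwise and sends $e^{\pm 1/2}$ to $-e^{\pm 1/2}$, while on $\Ue(\{e^{1/2},h\})\otimes (H^e)_\psi$ it is the involution prescribed in the statement. Agreement on generators is immediate since, for $a\in (H^e)_i$, one has $\sigma_e(\psi(a))=\sigma_e(ae^{-i/2})=a(-1)^{-i}e^{-i/2}=(-1)^i\psi(a)$, and $\sigma_e$ acts identically on $e^{1/2}$ and on $h$ on both sides.

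Next I would pass to $\sigma_e$-invariants. Since $\sigma_e$ has order two, for any $\sigma_e$-stable subalgebra $A$ of a skew field $D=\mathrm{Frac}(A)$ an invariant fraction $ab^{-1}\in D^{\sigma_e}$ rewrites as $(a\sigma_e(b))(b\sigma_e(b))^{-1}$, whose denominator lies in $A^{\sigma_e}$, forcing the numerator into $A^{\sigma_e}$ as well; thus $\mathrm{Frac}(A^{\sigma_e})=D^{\sigma_e}$. Applying this to both sides of the preceding lemma yields $H[e^{-1/2}]^{\sigma_e}\approx (\Ue(\{e^{1/2},h\})\otimes (H^e)_\psi)^{\sigma_e}$. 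Finally, every element of $H[e^{-1/2}]$ has the form $x_1e^{-n}+x_2e^{-n-1/2}$ with $x_1,x_2\in H$ and $n\in \mathbb Z$ by the earlier structural lemma; $\sigma_e$ fixes the first summand and negates the second, so $H[e^{-1/2}]^{\sigma_e}=H[e^{-1}]$, and since $e$ is a nonzero element of the domain $H$ we have $H\approx H[e^{-1}]$, completing the argument.

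The delicate step is the passage to invariants in the middle paragraph: one has to know that forming the quotient skew field commutes with taking fixed points of an order-two action. The argument is short, but this is the only place where the division-ring structure is genuinely used, as opposed to manipulations of generators and of the explicit formulas for $\sigma_e$.
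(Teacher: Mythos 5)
Your overall skeleton is the same as the paper's (implicit) argument: transport $\sigma_e$ through the isomorphism of the preceding lemma, use that the $\sigma_e$-fixed points of $H[e^{-\frac12}]$ are exactly $H[e^{-1}]\approx H$, and conclude by commuting ``take $\sigma_e$-invariants'' with ``take quotient fields''. The first and last of these steps are fine. The genuine gap is in the middle step, precisely the one you flag as delicate: your argument that $\Q(A)^{\sigma_e}=\Q(A^{\sigma_e})$ rests on the claim that the denominator $b\sigma_e(b)$ lies in $A^{\sigma_e}$. Here $\sigma_e$ is an automorphism (not an anti-automorphism) of a noncommutative algebra, so $\sigma_e(b\,\sigma_e(b))=\sigma_e(b)\,b$, which differs from $b\,\sigma_e(b)$ unless $b$ and $\sigma_e(b)$ commute. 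The trick you use is the one valid for commutative rings (or for anti-involutions), and the algebras $\Ue(\{e^{\frac12},h\})\otimes(H^e)_\psi$ and $H[e^{-\frac12}]$ are genuinely noncommutative, so as written the key step fails.

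The statement you need is nevertheless true and can be repaired in two ways. One is to invoke the general theorem on fixed rings of finite automorphism groups of Ore domains in characteristic $0$ (Faith; see also Montgomery's book on fixed rings), which gives $\Q(A)^{G}=\Q(A^{G})$ for a finite group $G$ of automorphisms; all algebras here are Ore since they are domains of finite Gelfand--Kirillov dimension. The other is a direct argument exploiting the $\mathbb Z/2$-grading by $\sigma_e$: writing $B:=\Ue(\{e^{\frac12},h\})\otimes(H^e)_\psi$ and $B_0:=B^{\sigma_e}$, one has $\Q(B_0)\subseteq \Q(H[e^{-\frac12}])^{\sigma_e}=\Q(H)$, because $\Q(H[e^{-\frac12}])=\Q(H)\oplus \Q(H)e^{\frac12}$ with $\sigma_e$ acting by $\pm1$ on the summands; conversely $\Q(H)\subseteq\Q(B_0)$, since $e,h\in B_0$ and every $a\in(H^e)_i$ with $i$ odd can be written as $a=\bigl(\psi(a)e^{\frac12}\bigr)e^{\frac{i-1}2}$, a product of $\sigma_e$-invariant elements of $B$ (for $i$ even, $a=\psi(a)e^{\frac i2}$ already works), so Lemma~\ref{L1} gives the inclusion. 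Either repair closes the gap; without one of them the proposal does not yet prove the corollary.
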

\section{Proof of Theorem~\ref{T1}}\label{SprT}
If $\frak g$ is of type $A$ the statement of Theorem~\ref{T1} follows from results~\cite{FMO}. Thus we can focus on all other cases and apply results of Section~\ref{SinvH}. Lemma~\ref{Ls1} implies that it is enough to prove the following proposition.
\begin{proposition}\label{Pm}If $\frak g$ is a simple Lie algebra then
$$(\mathbb F[z_d, \partial_d]\otimes H)^{\tau'\otimes\sigma}\approx \mathbb F[z_d, \partial_d]\otimes H,$$
where $\tau'(z_d)=-z_d, \tau'(\partial_d)=-\partial_d$.
\end{proposition}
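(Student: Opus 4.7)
Since the case when $\frak g$ is of type $A$ already follows from~\cite{FMO}, one may assume $\frak g$ is not of type $A$ and invoke Lemma~\ref{Lsl2inv}: there is an $\frak{sl}_2$-triple $\{e,h,f\}\subset H$ acting locally finitely, such that $\sigma$ is the action of the nontrivial central element of $SL_2(\mathbb F)$. Decomposing $H=\bigoplus_{i\in\mathbb Z}H_i$ by $\adj h$-weight, one has $H_iH_j\subseteq H_{i+j}$, and because $-I\in SL_2$ acts as $(-1)^i$ on every weight $i$ subspace of any $SL_2$-module, $\sigma|_{H_i}=(-1)^i\cdot\mathrm{id}$. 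This grading is the tool that drives an explicit birational equivalence.

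The plan is to define a map $\psi\colon\mathbb F[z_d,\partial_d]\otimes H\to(\mathbb F[z_d,\partial_d]\otimes H)^{\tau'\otimes\sigma}$, taking values in the quotient skew field, by
$$\psi(z_d)=z_d^2,\qquad \psi(\partial_d)=\frac{\partial_d}{2z_d}-\frac{h}{2z_d^2},\qquad \psi(a)=a\,z_d^i\text{ for }a\in H_i.$$
Each image is $(\tau'\otimes\sigma)$-invariant, since e.g.\ $(\tau'\otimes\sigma)(az_d^i)=(-1)^i a\cdot(-1)^i z_d^i=az_d^i$. A short commutator computation, using only $[h,a]=ia$ for $a\in H_i$ and $[\partial_d,z_d]=1$, verifies $[\psi(\partial_d),\psi(z_d)]=1$ and $[\psi(\partial_d),\psi(a)]=0$ for every $a\in H_i$; the correction term $-h/(2z_d^2)$ is engineered precisely so that its commutator with $a$ cancels the commutator of $\partial_d/(2z_d)$ with the factor $z_d^i$ in $\psi(a)$. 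Combined with the injectivity of $\phi:=\psi|_H$ (coming from linear independence of $\{z_d^i\}_{i\in\mathbb Z}$ over $H$ inside the ambient quotient field), this makes $\psi$ an injective ring homomorphism onto a subalgebra of the invariant subalgebra isomorphic to $\mathbb F[z_d,\partial_d]\otimes H$.

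It then remains to show that the image of $\psi$ generates the quotient field of the invariant subalgebra. Writing $(\mathbb F[z_d,\partial_d]\otimes H)^{\tau'\otimes\sigma}=W^+\otimes H^+\oplus W^-\otimes H^-$ along joint $\pm 1$-eigenspaces of $\tau'$ and $\sigma$, each summand lies in the quotient field of the image: every $a\in H_i$ with $i$ even equals $\phi(a)\psi(z_d)^{-i/2}$, and every $a\in H_i$ with $i$ odd satisfies $z_d a=\phi(a)\psi(z_d)^{(1-i)/2}$, while the quotient field of $W^+$ is generated by $z_d^2=\psi(z_d)$ and $\partial_d/(2z_d)=\psi(\partial_d)+h/(2\psi(z_d))$; since $W^-$ equals $z_d\cdot W^+$ in the quotient, every element of $W^-\otimes H^-$ decomposes as a product of factors already in the image. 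Hence $\psi$ extends to an isomorphism of quotient fields, proving the proposition. The main obstacle is isolating the correct correction term $-h/(2z_d^2)$ in $\psi(\partial_d)$; once the $\adj h$-grading from Lemma~\ref{Lsl2inv} is available, the remaining verifications reduce to routine commutator calculations.
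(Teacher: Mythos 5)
Your proposal is correct in substance, but it takes a genuinely different route from the paper. The paper proves the proposition by decomposing $H$ itself: it forms the localization $H[e^{-\frac12}]$, shows $H[e^{-\frac12}]\approx\Ue(\{e^{\frac12},h\})\otimes(H^e)_\psi$, identifies $\Ue(\{e^{\frac12},h\})$ birationally with a Weyl algebra $W_y$ so that $H\approx(W_y\otimes(H^e)_\psi)^{\sigma_y\times\sigma}$, and then untwists the resulting pair of commuting involutions on $W_x\otimes W_y\otimes(H^e)_\psi$ by the change of variables $z=\frac yx$, $\partial_z=x\partial_y$, $\partial_x'=\partial_x+\frac yx\partial_y$ (Lemma~\ref{Ldec}). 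You bypass all of this machinery: from Lemma~\ref{Lsl2inv} you retain only the $\adj h$-weight grading $H=\bigoplus_iH_i$ and the identity $\sigma|_{H_i}=(-1)^i$, and you exhibit an explicit embedding of $\mathbb F[z_d,\partial_d]\otimes H$ into the quotient field of the invariants ($z_d\mapsto z_d^2$, $\partial_d\mapsto\tfrac12\partial_dz_d^{-1}-\tfrac12hz_d^{-2}$, $a\mapsto az_d^i$ for $a\in H_i$) whose image generates that quotient field; your commutator checks and the generation argument (via $a=\phi(a)\psi(z_d)^{-i/2}$ for even $i$, $z_da=\phi(a)\psi(z_d)^{(1-i)/2}$ for odd $i$, and the fact that $z_d^2$ and $\partial_dz_d^{-1}$ generate the quotient field of the even part of the Weyl algebra) are sound. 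What each approach buys: yours is more elementary and slightly more general, since it uses only $h$ and an inner locally finite $\mathbb Z$-grading realizing $\sigma$, with no need for $e^{\pm\frac12}$, $f$, or the subalgebra $H^e$; the paper's route is longer but produces the intermediate decomposition $H\approx(W_y\otimes(H^e)_\psi)^{\sigma_y\times\sigma}$, which is of independent structural interest. Two points you should tighten, both at the level of the paper's own ``one can easily check'' standard: injectivity of your map on all of $\mathbb F[z_d,\partial_d]\otimes H$ (not merely on $H$) requires a short extra argument, e.g.\ filter by $\partial_d$-degree and observe that the top coefficient of the image of $\sum_n\partial_d^n\bigl(\sum_mz_d^ma_{m,n}\bigr)$ is a nonzero combination $2^{-N}z_d^{-N}\sum_{m,i}(a_{m,N})_iz_d^{2m+i}$, the homogeneous pieces contributing to a fixed power of $z_d$ having distinct $\adj h$-degrees; and the images $\tfrac12\partial_dz_d^{-1}-\tfrac12hz_d^{-2}$ lie in the quotient field of the invariant subalgebra (e.g.\ $\partial_dz_d^{-1}=(\partial_dz_d)\,(z_d^2)^{-1}$), not in the invariant subalgebra itself, so the phrase ``onto a subalgebra of the invariant subalgebra'' should read ``onto a subalgebra of its quotient field.''
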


According to Section~\ref{SinvH} algebra $H[e^{-\frac12}]$ carries two involutions: $\sigma$ and $\sigma_e$. The involutions $\sigma_e$ and $\sigma$ commutes one with each other and thus $\sigma_e\sigma$ is also an involution of $H[e^{-\frac12}]$. We denote this involution $\sigma_U$. One can easily check that

1) $\sigma_U$ preserves $(H^e)_\psi$ pointwise and preserves $\Ue(\{e^\frac12, h\})$ as a set,

2) $\sigma$ preserves $\Ue(\{e^\frac12, h\})$ pointwise and preserves $(H^e)_\psi$ as a set,

3) $\sigma_e=\sigma_U\sigma=\sigma\sigma_U$.\\

Set $$W_x:=\mathbb F[x, \partial_x]:=\mathbb F\langle x, \partial_x\rangle/(x\partial_x-\partial_xx=-1)$$
(polynomial differential operators in one variable). We denote by $\sigma_x$ the involution of $W_x$ defined by the following formulas
$$\sigma_x(x)=-x, \sigma_x(\partial_x)=-\partial_x.$$
Similarly we define $W_y$ and $\sigma_y$. The quotient fields of algebras $W_y$ and $\Ue(\{e^\frac12, h\})$ are isomorphic via identification
$$y\leftrightarrow e^\frac12,\hspace{10pt} y\partial_y\leftrightarrow h,$$ and involutions $\sigma_y$ and $\sigma_U$ corresponds one to each other under this isomorphism.

We have that $H\approx H[e^{-1}]\approx(\Ue(\{e^\frac12, h\})\otimes (H^e)_\psi)^{\sigma_e}$ and thus $H\approx(W_y\otimes (H^e)_\psi)^{\sigma_y\times\sigma}$. Therefore Proposition\ref{Pm} is implied by the following lemma.
\begin{lemma}\label{Ldec}We have $(W_x\otimes W_y\otimes (H^e)_\psi)^{\{\sigma_x\times \sigma, \sigma_y\times \sigma\}}\approx(W_x\otimes W_y\otimes (H^e)_\psi)^{\sigma_y\times \sigma},$ where $\{\sigma_x\times \sigma, \sigma_y\times \sigma\}$ is a group of order 4 generated by $\sigma_x\times\sigma$ and $\sigma_y\times\sigma$.\end{lemma}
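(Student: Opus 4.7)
The plan is to apply a birational automorphism of $W_x\otimes W_y$ in the style of Lemma~\ref{Ls1} to simplify the Klein 4-group action, and then to recognize both sides of the desired equivalence as birational copies of $H\otimes W_1$ for a one-variable Weyl algebra $W_1$.

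The key observation is that the third nonidentity element of the group $\{\sigma_x\times\sigma, \sigma_y\times\sigma\}$ is $(\sigma_x\sigma_y)\times 1$, which acts trivially on the $(H^e)_\psi$-factor and negates all four generators of $W_x\otimes W_y$. I would apply the birational map used in the proof of Lemma~\ref{Ls1} with $d=2$: set $X=x/y$, $Y=y$, $D_X=y\partial_x$, $D_Y=\partial_y+y^{-1}x\partial_x$. A direct verification (entirely analogous to the check in Lemma~\ref{Ls1}) shows that these satisfy the Weyl algebra relations and that, in the new coordinates, $\sigma_x\sigma_y$ becomes the involution $\sigma_Y$ negating only $Y,D_Y$, that $\sigma_x$ becomes $\sigma_X$ negating only $X,D_X$, and that $\sigma_y$ becomes $\sigma_X\sigma_Y$ negating all four new generators. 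Consequently, writing $A=W_X\otimes W_Y\otimes(H^e)_\psi$ and $B=(H^e)_\psi$, the Klein 4-group is generated in new coordinates by $\sigma_X\times\sigma$ and $\sigma_Y\times 1$, with the latter acting only on the $W_Y$ factor.

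Now the invariants factor nicely. Since $\sigma_Y\times 1$ affects only $W_Y$, we get $A^{\sigma_Y\times 1}=W_X\otimes W_Y^{\sigma_Y}\otimes B$, and since $\sigma_X\times\sigma$ acts trivially on $W_Y^{\sigma_Y}$, imposing the second invariance yields $A^G=(W_X\otimes B)^{\sigma_X\otimes\sigma}\otimes W_Y^{\sigma_Y}$. The factor $(W_X\otimes B)^{\sigma_X\otimes\sigma}$ is precisely the same construction as $H\approx(W_y\otimes(H^e)_\psi)^{\sigma_y\otimes\sigma}$ (with $W_X$ playing the role of $W_y$), so $(W_X\otimes B)^{\sigma_X\otimes\sigma}\approx H$, and thus $A^G\approx H\otimes W_Y^{\sigma_Y}$. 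Meanwhile, in original coordinates, $A^{\sigma_y\times\sigma}\approx W_x\otimes H$ directly from the decomposition $A=W_x\otimes (W_y\otimes B)$.

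It remains to show $W_Y^{\sigma_Y}\approx W_1$. In the quotient field of $W_Y$, set $u=Y^2/2$ and $v=Y^{-1}D_Y$; both are $\sigma_Y$-invariant, and a direct computation using $[Y,D_Y]=1$ gives $[u,v]=1$, so $u,v$ generate a Weyl subalgebra $W_1'\cong W_1$. The usual generators $Y^2, YD_Y, D_Y^2$ of $W_Y^{\sigma_Y}$ are expressible in $u,v$ in the quotient field (explicitly $Y^2=2u$, $YD_Y=2uv$, $D_Y^2=2uv^2-v$), so $\mathrm{quot}(W_1')=\mathrm{quot}(W_Y^{\sigma_Y})$. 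Combining everything gives $A^G\approx H\otimes W_1\approx W_x\otimes H\approx A^{\sigma_y\times\sigma}$. The main obstacle is the bookkeeping in Step~1, namely verifying the transformation rules of the three Klein-group involutions under the birational change of variables; once that is done, the remaining computations are routine.
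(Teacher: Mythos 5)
Your proof is correct and follows essentially the same route as the paper: a birational change of coordinates on $W_x\otimes W_y$ (yours is the mirror image of the paper's $z=y/x$, $\partial_z=x\partial_y$, $\partial_x'=\partial_x+\frac{y}{x}\partial_y$) that turns the third involution $\sigma_x\sigma_y\times 1$ into the sign involution on a single Weyl factor not touching $(H^e)_\psi$, after which the invariants split as a tensor product and each factor is identified as in the paper. The only addition is that you explicitly verify $W_Y^{\sigma_Y}\approx W_1$ via $u=Y^2/2$, $v=Y^{-1}D_Y$, a fact the paper merely asserts as $(W_x')^{\sigma_x'}\approx W_x$.
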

To prove Lemma~\ref{Ldec} we need another set of generators of the quotient field of $W_x\otimes W_y$. Namely we set

$$z:=\frac yx,\hspace{10pt}\partial_z:=x\partial_y,\hspace{10pt} \partial_x':=\partial_x+\frac yx\partial_y.$$
One can easily check that $z, \partial_z, x, \partial_x'$ generates the quotient field of $W_x\otimes W_y$ and that

$$[z, x]=[\partial_x', \partial_z]=[z, \partial_x']=[x, \partial_z]=0,\hspace{10pt}[x, \partial_x']=[z, \partial_z]=-1.$$
 Put $W_x':=\mathbb F[x, \partial_x'], W_z:=\mathbb F[z, \partial_z]$. We have that $\mathbb F[x, \partial_x', z, \partial_z]\cong W_x'\otimes W_z$ and that the quotient field of $W_x\otimes W_y$ is identified with the quotient field of $W_x'\otimes W_z$ under this isomorphism. We have that
$$\sigma_y(z)=-z, \sigma_y(\partial_z)=-\partial_z, \sigma_y(x)=x, \sigma_y(\partial_x')=\partial_x'$$and$$\sigma_x\sigma_y(z)=z, \sigma_x\sigma_y(\partial_z)=\partial_z, \sigma_x\sigma_y(x)=-x, \sigma_x\sigma_y(\partial_x')=-\partial_x'.$$
We set $\sigma_x':=\sigma_x\sigma_y$ and $\sigma_z:=\sigma_y$ (this notation is justified the formulas above).
\begin{proof}[Proof of Lemma 1.5.]We have that \begin{center}$(W_x\otimes W_y\otimes (H^e)_\psi)^{\{\sigma_x\times \sigma, \sigma_y\times \sigma\}}=(W_x\otimes W_y\otimes (H^e)_\psi)^{\{\sigma_x\times\sigma_y, \sigma_y\times \sigma\}}=$\\$=(W_x\otimes W_y\otimes (H^e)_\psi)^{\{\sigma_x', \sigma_y\times \sigma\}}\approx (W_x'\otimes W_z\otimes (H^e)_\psi)^{\{\sigma_x', \sigma_z\times\sigma\}}=(W_x')^{\sigma_x'}\otimes (W_z\otimes (H^e)_\psi)^{\sigma_z\times\sigma}$.\end{center}
We left to mention $(W_x')^{\sigma_x'}\approx W_x$ and $(W_z\otimes(H^e)_\psi)^{\sigma_z\times\sigma}\cong(W_y\otimes(H^e)_\psi)^{\sigma_y\times\sigma}\approx A$.
\end{proof}
\section*{Acknowledgements}
I would like to thank A.~Premet for many useful discussions on $W$-algebras which help me to write this paper.


\begin{thebibliography}{100}
\bibitem[Cn]{Cn} P.~Cohn, Free rings and their relations. Second edition. London Mathematical Society Monographs, 19. Academic Press, Inc. [Harcourt Brace Jovanovich, Publishers], London, 1985.
\bibitem[FMO]{FMO}  V.~Futorny, A.~Molev, S.~Ovsienko, The Gelfand-Kirillov conjecture and Gelfand-Tsetlin modules for finite W-algebras, Adv. Math. {\bf 223 no. 3} (2010), 773--796.
\bibitem[GK1]{GK1} I.~Gelʹfand, A.~Kirillov, On fields connected with the enveloping algebras of Lie algebras. (Russian) Dokl. Akad. Nauk SSSR {\bf 167} 1966, 503--505.
\bibitem[GK2]{GK2} I.~M.~Gelʹfand, A.~A.~Kirillov, Sur les corps li\'es aux alg\` ebres enveloppantes des alg\` ebres de Lie. (French) Inst. Hautes \' Etudes Sci. Publ. Math. {\bf no. 31} (1966), 5--19.
\bibitem[Lo]{Lo} I.~Losev, Quantized symplectic actions and W-algebras, J. Amer. Math. Soc. {\bf 23 no. 1} (2010), 35--59.
\bibitem[VO]{OV} A.~Onishchik, \'E.~Vinberg, Lie groups and algebraic groups, Springer series in Soviet mathematics, Springer-Verlag, Berlin, 1990.
\bibitem[Pr1]{Pr1} A. Premet, Enveloping algebras of Slodowy slices and the Joseph ideal, J.~Eur.~Math.~Soc. (JEMS) {\bf 9 no 2} (2007), 487--543.
\bibitem[Pr2]{Pr2} A. Premet, Modular Lie algebras and the Gelfand-Kirillov conjecture. Invent. Math. {\bf 181 no 2} (2010), 395--420.
\end{thebibliography}
\end{document}